\theoremstyle:=definition,remark,plain\do{%
        \expandafter\g@addto@macro\csname th@\theoremstyle\endcsname{%
            \addtolength\thm@preskip\parskip
            }%
        }
\newtheorem{theorem}{Theorem}[section]
\newtheorem{lemma}[theorem]{Lemma}
\newtheorem{claim}[theorem]{Claim}
\newcommand{\naturals}{\ensuremath{\mathbb{N}}}
\title{Ordered Size Ramsey Number of Paths}
\author{%
  J\'ozsef Balogh \footnote{Department of Mathematical Sciences, University of Illinois at Urbana-Champaign, Urbana, Illinois 61801, USA, and Moscow Institute of Physics and Technology, 9 Institutskiy per., Dolgoprodny, Moscow Region,141701, Russian Federation. Email: jobal@math.uiuc.edu. Research is partially supported by NSF Grant DMS-1500121, Arnold O. Beckman Research
Award (UIUC Campus Research Board RB 18132) and the Langan Scholar Fund (UIUC).}%
 \and Felix Christian Clemen \footnote {Department of Mathematics, University of Illinois at Urbana-Champaign, Urbana, Illinois 61801, USA, Email: fclemen2@illinois.edu.}%
 \and Emily Heath \footnote {Department of Mathematics, University of Illinois at Urbana-Champaign, Urbana, Illinois 61801, USA, Email: eheath3@illinois.edu}%
 \and Mikhail Lavrov \footnote {Department of Mathematics, University of Illinois at Urbana-Champaign, Urbana, Illinois 61801, USA, Email: mlavrov@illinois.edu}%
  }
\begin{document}
\maketitle

\begin{abstract}
An \emph{ordered graph} is a simple graph with an ordering on its vertices.
Define the \emph{ordered path} $P_n$ to be the monotone increasing path with $n$ edges.
The \emph{ordered size Ramsey number} $\tilde{r}(P_r,P_s)$ is the minimum number $m$ for which there exists an ordered graph $H$ with $m$ edges such that every two-coloring of the edges of $H$ contains a red copy of $P_r$ or a blue copy of $P_s$.
For $2\leq r\leq s$, we show $\frac{1}{8}r^2s\leq \tilde{r}(P_r,P_s)\leq Cr^2s(\log s)^3$, where $C>0$ is an absolute constant. This problem is motivated by the recent results of Buci\'c--Letzter--Sudakov~\cite{BLS} and Letzter--Sudakov~\cite{LS} for oriented graphs.
\end{abstract}

\section{Introduction} 

Ramsey theory is the branch of combinatorics which studies the forced appearance of certain substructures in sufficiently large structures. 
One of the classical results in the field is Ramsey's theorem \cite{R}, which states that for any graph $G$, every sufficiently large two-edge-colored complete graph must contain a monochromatic copy of $G$. 
It is natural to ask for the minimum order of a complete graph with this property, which we refer to as the \emph{Ramsey number} of $G$, denoted $r(G)$.

The study of Ramsey numbers has branched out in many different directions.
For example, Erd\H{o}s, Faudree, Rousseau, and Schelp \cite{EFRS} introduced the idea of the size Ramsey number in 1972. The \emph{size Ramsey number} of a graph $G$, denoted $\hat{r}(G)$, is the minimum integer $m$ for which there is a graph $H$ with $m$ edges such that every two-coloring of $E(H)$ contains a monochromatic copy of $G$.

Let $P_n$ denote the path on $n$ vertices.
It is well-known that $r(P_n)=\Theta(n)$.
Since the complete graph on $r(G)$ vertices has $\binom{r(G)}{2}$ edges, this implies $\hat{r}(P_n)=O(n^2)$. Erd\H{o}s~\cite{E} offered a \$100 reward for determining the asymptotic behavior of $\hat{r}(P_n)$; the question was settled in 1983 by Beck~\cite{B}, who showed that that $\hat{r}(P_n)$ is also linear in $n$.
The constant in this upper bound has been improved several times (as in \cite{Bo}, \cite{DP}, \cite{L}),  and the current best upper bound, given by Dudek and Pra\l at \cite{DP2}, is $\hat{r}(P_n)\leq 74n$.

Beck \cite{B2} also gave the first nontrivial lower bound on $\hat{r}(P_n)$.
This result was later improved by Bollob\'as \cite{Bo2}, who showed that $\hat{r}(P_n)\geq (1+\sqrt{2})n-O(1)$, and then by Dudek and Pra\l at~\cite{DP2}, who gave the current best lower bound of $\hat{r}(P_n)\geq 5n/2-O(1)$.

More generally, Krivelevich \cite{K} showed that  $\hat{r}(P_n,k)$, the size Ramsey number of the path with respect to edge-coloring with $k$ colors, satisfies $\hat{r}(P_n,k)=O((\log k)k^2n)$.
Dudek and Pra\l at \cite{DP3} have also provided an alternative proof of this upper bound, which is nearly optimal since $\hat{r}(P_n,k)=\Omega(k^2n)$.

The question of determining the size Ramsey number of the path is also of interest in the setting of oriented graphs.
An \emph{oriented graph} is a directed graph in which at most one of $xy$ and $yx$ appears as an edge for each pair of vertices $x$ and $y$.
The \emph{oriented size Ramsey number} of an oriented graph $G$, denoted $\vec{r}(G)$, is the minimum number $m$ for which every two-coloring of the edges of some oriented graph $H$ with $m$ edges contains a monochromatic copy of $G$.
One significant difference arises between the undirected and oriented cases: the same undirected graph admits many different orientations, and these orientations may have very different oriented size Ramsey numbers.
We restrict our attention to the monotone increasing path, which we refer to as the \emph{ordered path}.
Throughout the rest of the paper, we denote the ordered path on $n+1$ vertices and $n$ edges simply by $P_n$.

While the size Ramsey number of an undirected path is linear, the oriented size Ramsey number of the ordered path, $\vec{r}(P_n)$, is very different.
Following initial results by Ben-Eliezer, Krivelevich, and Sudakov \cite{BKS} bounding $\vec{r}(P_n)$, Buci\'c, Letzter, and Sudakov \cite{BLS} recently gave a nice proof showing that $\vec{r}(P_n)= O(n^2\log n)$ by giving a lower bound on the longest monochromatic path in two-coloured random tournaments.
Letzter and Sudakov \cite{LS} also gave a matching lower bound which shows that $\vec{r}(P_n)=\Theta(n^2\log n)$.

In this paper, we consider ordered graphs.
An \emph{ordered graph} on $n$ vertices is a simple graph whose vertices have been labeled with $\{1, 2, \ldots, n\}$.
Since ordered graphs can be viewed as acylic oriented graphs, any lower bound on the oriented size Ramsey number of $P_n$ also applies when we restrict our attention to ordered graphs.
However, one significant difference between the study of ordered and oriented graphs is the lack of symmetry in the ordered case.
For example, the edge 12 in an $n$-vertex ordered graph $G$ plays a very different role in ordered subgraphs of $G$ than the edge $1n$. 

The Erd\H{o}s--Szekeres Theorem \cite{ESz} states that every sequence of $n^2+1$ distinct real numbers must contain an increasing or decreasing subsequence of $n+1$ numbers. 
This result can be interpreted as giving the least number of vertices in an ordered graph such that any two-coloring of the edges contains a monochromatic ordered path with $n+1$ vertices.
Here, we consider the \emph{ordered size Ramsey number}, minimizing the number of edges in ordered graphs with this property rather than the number of vertices.
Formally, let $\tilde r(P_r, P_s)$ denote the minimum number of edges in an ordered graph for which any red-blue coloring of the edges contains either a red ordered $P_r$ or a blue ordered $P_s$.

Our main result is the following theorem:

\begin{theorem}
	\label{thm:size-ramsey}
	For some absolute constant $C>0$ and for all $2 \le r \le s$,
	\[
		\frac18 r^2s\le \tilde r(P_r, P_s) \le C r^2 s (\log s)^3.
	\]
\end{theorem}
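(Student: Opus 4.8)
I would first observe that a red/blue colouring of an ordered graph $H$ has no monotone red $P_r$ and no monotone blue $P_s$ if and only if there are maps $\rho\colon V(H)\to\{0,1,\dots,r-1\}$ and $\beta\colon V(H)\to\{0,1,\dots,s-1\}$ with $\rho(u)<\rho(v)$ or $\beta(u)<\beta(v)$ for every edge $uv$, $u<v$: given such maps, colour $uv$ red exactly when $\rho$ increases along it, so $\rho$ (resp.\ $\beta$) strictly increases along every monotone red (resp.\ blue) path; conversely take $\rho(v)$ to be the length of the longest monotone red path ending at $v$. Thus $\tilde r(P_r,P_s)$ is the least number of edges of an ordered graph admitting no such pair $(\rho,\beta)$.

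\textbf{Lower bound.} I would prove by induction on $r+s$ that every ordered graph $H$ with fewer than $\frac18 r^2 s$ edges admits such a pair (the case $\min(r,s)=1$ being immediate, since $\tilde r(P_1,P_t)=t$). For the inductive step, run a greedy left-to-right sweep cutting $V(H)$ into consecutive intervals $I_1<\dots<I_t$, closing off the current interval just before adding a vertex would push the number of edges induced on it up to $\tilde r(P_r,P_b)$, where $b<s$ is a budget to be optimised. A charging argument---each completed interval, together with the back-edges from the vertex that triggered its cut, is responsible for at least $\tilde r(P_r,P_b)$ edges of $H$, and these edge sets are pairwise disjoint---gives $t\le |E(H)|/\tilde r(P_r,P_b)+1$, which together with the inductive bound $\tilde r(P_r,P_b)\ge\frac18\min(r,b)^2\max(r,b)$ and $|E(H)|<\frac18 r^2 s$ forces $tb\le s$ for a suitable $b$. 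Since $e(H[I_j])<\tilde r(P_r,P_b)$, each $H[I_j]$ has a good pair $(\rho_j,\beta_j)$ for $(P_r,P_b)$; set $\rho=\rho_j$ on $I_j$ (legitimate because every inter-interval edge will be coloured blue, so $\rho$ is never constrained across intervals) and let $\beta$ be $\beta_j$ shifted into one of $t$ disjoint increasing blocks of $\{0,\dots,s-1\}$, so $\beta$ strictly increases along every blue edge, inter-interval ones included. The places needing care are the near-diagonal range $s\le 2r$, where the budget must be chosen differently (the bound one gets there has the right order $r^2 s$ but a worse constant, so one optimises to recover $\frac18$), and vertices of very large back-degree, which can make a single interval absorb many edges; I expect the latter to be the delicate point in the bookkeeping.

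\textbf{Upper bound.} Here I must construct an ordered graph with $O\bigl(r^2 s(\log s)^3\bigr)$ edges admitting no pair $(\rho,\beta)$. Note that a random ordered graph on $N$ vertices with edge probability $p$, analysed by a union bound over the $r^N$ choices of $\rho$, only yields $O\bigl((rs)^2\,\mathrm{polylog}\bigr)$, essentially the transitive-tournament bound $\tilde r(P_r,P_s)=O((rs)^2)$; removing the spurious factor $s$ is the crux. I would instead concatenate order $s$ copies of a small gadget $\Gamma$---for instance a transitive tournament on $2r+1$ vertices, which is $(P_r,P_2)$-Ramsey and has only $O(r^2)$ edges---so that in any colouring with no red $P_r$ every copy contains a short blue monotone path, and then insert between consecutive copies a \emph{connector} whose role is to force, again unless a red $P_r$ appears, a blue edge splicing the blue path of one copy onto that of the next; stringing these together produces a blue $P_s$, and the total edge count is $O(r^2 s)$ plus the cost of the connectors. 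The connector must work no matter where in its two neighbouring copies the blue paths happen to end and begin, and this is what costs logarithmic factors: I would take it to be a small transitive-tournament-like piece with enough parallel ``tracks'', chosen randomly, that the splice survives every colouring, and then amplify its reliability so that one random choice works simultaneously for all the connectors, the three logarithms entering through the range of possible endpoints of a blue path inside a gadget, the amplification past the union bound over connectors, and a further loss in making the splice robust to the colouring. The main obstacle---and where I expect to spend the most effort---is designing the connector so that it is only polylogarithmically, rather than polynomially, expensive in $s$, i.e.\ forcing a monochromatic monotone path to cross the join cheaply.
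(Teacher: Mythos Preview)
Your interval-cutting recursion is a genuinely different route from the paper's, but as stated it does not close. With the inductive hypothesis $\tilde r(P_r,P_b)\ge\frac18 r^2 b$ for $b\ge r$, your charging gives only $t-1<s/b$, hence $tb<s+b$, not $tb\le s$; the $\beta$-blocks overflow by exactly one, and taking $b<r$ makes the arithmetic worse. One can perhaps rescue some constant $c<\tfrac18$ by a more delicate induction, but $\tfrac18$ does not fall out of this scheme. The paper's argument is non-recursive and much shorter: given $G$ with at most $\tfrac{r}{2}\cdot\tfrac{r}{2}\cdot(s-\tfrac r2)$ edges, let $U_0$ be the set of vertices with at least $d=r/2$ earlier neighbours, so $|U_0|\le |E(G)|/d\le\tfrac r2(s-\tfrac r2)$; apply an Erd\H os--Szekeres block colouring to $G[U_0]$ (consecutive blocks of size $r-d$, within-block edges red, between-block blue). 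The remaining graph $G-U_0$ has a greedy proper $d$-colouring $U_1,\dots,U_d$ (each vertex has fewer than $d$ earlier neighbours there); colour an edge red if the class index drops along it, blue otherwise. A red path then meets each $U_i$, $i\ge1$, at most once and has at most $r-d$ vertices in $U_0$, so at most $r$ in total; similarly for blue. No induction, and the constant is immediate.

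\textbf{Upper bound.} Here the gap is more serious: the connector you leave undesigned is the entire difficulty, and the gadget scheme cannot work as described. Any connector sitting strictly between two gadgets can have all its edges coloured red by the adversary; since a monotone path through a bipartite piece has length~$1$ (and only $O(1)$ more if you allow a few internal connector vertices), no red $P_r$ is forced there, and no blue splice ever occurs---your blue $P_2$'s remain disconnected. The paper's key idea is to apply your own $(\rho,\beta)$ reformulation \emph{globally} rather than locally: if there is no red $P_r$ in the whole host graph, then $\rho=R(\cdot)$ takes at most $r$ values on all of $V(G)$, so some value is attained on a set $A$ of $\ge n/r=4s$ vertices, and every edge inside $A$ is blue. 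The real task is therefore to build a sparse ordered graph on $n=4rs$ vertices in which \emph{every} $4s$-subset contains a monotone $P_s$; this is a one-colour path-finding problem, and it is solved by an inhomogeneous random graph: a union of $O(\log s)$ layers $G_0,\dots,G_k$, where $G_i$ joins pairs at distance $\le m_i\asymp 2^i r(\log s)^2$ independently with probability $2^{-i}$, followed by a path-merging argument that combines short paths into longer ones across successive scales. The three logarithms come from the number of layers, the edge count per layer, and the set size needed to beat the union bound in each layer. Your gadget-and-connector picture is much closer in spirit to the paper's Section~4 construction, which does build the host recursively from blocks linked by bipartite pieces---but that approach only yields $s\,e^{O(\sqrt{\log s}\,\log r)}$, strictly weaker than $r^2 s(\log s)^3$.
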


It would be interesting to determine the true asymptotic behavior of $\hat{r}(P_r,P_s)$, although we do not have a conjecture.

In Section 2, we prove the upper bound of Theorem~\ref{thm:size-ramsey}.
While random and pseudorandom graphs have been used to prove upper bounds on several variants of size Ramsey numbers (for example, see Beck~\cite{B} and Alon--Chung~\cite{AC}), our proof is the first which uses inhomogeneous random graphs to obtain a result of this type.

In Section 3, we prove the lower bound of Theorem~\ref{thm:size-ramsey}.
Our strategy is to adapt an edge-coloring algorithm used by Reimer~\cite{Re} to show that the directed size Ramsey number of $P_n$ is $\Omega(n^2)$.

In Section 4, we describe an alternative approach used to obtain an upper bound on $\tilde r(P_r, P_s)$ which transforms the question of edge-coloring into a problem about vertex-coloring. 
In order to give an upper bound on the $b$-color ordered Ramsey number of the ordered path $P_s$, we iteratively define a sequence of graphs, the last of which proves the following theorem:

\begin{theorem} 
	\label{thm:vertex-coloring}
	There exists an ordered graph with $se^{O(\log b \sqrt{\log s})}$ edges for which every $b$-coloring of the vertices contains a monochromatic ordered path of length~$s$.
\end{theorem}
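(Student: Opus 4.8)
The plan is to realize the desired graph as the last term of a sequence $G_0, G_1, \dots, G_d$ of ordered graphs, where $G_i$ has the property that every $b$-coloring of $V(G_i)$ produces a monochromatic ordered path with at least $\ell_i$ edges, $\ell_0$ is a constant, and $\ell_{i+1}$ is a fixed multiple $t$ of $\ell_i$. For the base case take $G_0$ to be the complete ordered graph on $2b$ vertices, where every color class is already a monotone increasing path, so pigeonhole gives $\ell_0 \ge 1$. For the inductive step I would use a \emph{substitution}: fix a small ``template'' ordered graph $T$ on $\Theta(bt)$ vertices that is $b$-color vertex-Ramsey for $P_t$ (a complete ordered graph of that size works), replace each vertex of $T$ by a consecutive copy of $G_i$, and wire together the copies corresponding to adjacent vertices of $T$. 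Given a $b$-coloring of $G_{i+1}$, each copy of $G_i$ yields a monochromatic path with $\ell_i$ edges in some color; reading these colors as a $b$-coloring of $V(T)$ and applying the Ramsey property of $T$ gives $\Theta(t)$ copies whose monochromatic paths share a color $c$, and the wiring is arranged so that these $c$-colored pieces can be concatenated into a monochromatic path with $\ell_{i+1} = \Theta(t\,\ell_i)$ edges. Taking $t = e^{\Theta(\sqrt{\log s})}$ makes $d = \Theta(\sqrt{\log s})$ steps enough for $\ell_d \ge s$, and since each step multiplies the vertex count by $\Theta(bt)$ we obtain $|V(G_d)| = s\cdot e^{O(\sqrt{\log s}\,\log b)}$, already the target order of magnitude.

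Everything then comes down to the number of \emph{edges}, where the delicate point is the wiring. Wiring adjacent copies of $G_i$ by a complete bipartite graph certainly permits concatenation -- the endpoints of the two monochromatic paths are beyond our control -- but it multiplies the edge density by about $|V(G_i)|$ at each step and produces $\Theta(s^2)$ edges, killing the saving. Instead I would equip each $G_i$ with an \emph{interface}: a fresh ``entry'' gadget at its very start and a fresh ``exit'' gadget at its very end, each a complete ordered graph on about $t$ vertices joined completely to the rest of $G_i$, and strengthen the inductive statement to read: \emph{for every coloring there is a color $c$ and a monochromatic $c$-path with $\ell_i$ edges whose first vertex lies in the entry gadget and whose last vertex lies in the exit gadget.} Then adjacent copies need only be joined across their (size $\approx t$) interfaces: one routes the path out through one copy's exit gadget, across a monochromatic edge into the next copy's entry gadget, and onward, and after the whole concatenation one extends the result backward into $G_{i+1}$'s own entry gadget and forward into its exit gadget. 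This keeps the per-step cost to about $t^4$ wiring edges and $t^2\,|V(G_{i+1})|$ interface edges, so over $d = \Theta(\sqrt{\log s})$ steps only a factor $e^{O(\sqrt{\log s})}$ is lost; this yields $s\cdot e^{O(\sqrt{\log s})}$ edges for the two-color version, and a secondary recursion that reduces a $b$-coloring to a $(b/2)$-coloring by pairing colors -- iterated $\log_2 b$ times, each time re-incurring a factor $e^{O(\sqrt{\log s})}$ -- promotes this to $b$ colors with the stated bound.

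The step I expect to be the real obstacle is the strengthened inductive statement about the interface: a small vertex set cannot be forced by an adversary to contain a prescribed color, so the interface argument genuinely needs the number of colors to already be $2$, where any set of at least two vertices realizes both colors and hence contains a vertex of whatever color $c$ the monochromatic path turns out to have -- this is exactly why the pairing recursion must be run first. One then has to check that the constant extra length spent rerouting into and out of the gadgets does not accumulate over the $\Theta(\sqrt{\log s})$ substitution levels, that the copies are placed so that all the required order relations among interface vertices hold, and that the pigeonhole bookkeeping survives each concatenation as well as each passage to super-colors. Those verifications are where I would expect the bulk of the work to lie.
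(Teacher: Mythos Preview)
Your recursive substitution framework is close in spirit to the paper's construction, but the interface idea has a genuine gap. The claim that in a $2$-coloring ``any set of at least two vertices realizes both colors'' is simply false for vertex colorings: the adversary is free to color the entire entry gadget red while the long monochromatic path in the rest of $G_i$ is blue, and then no blue path can start in the gadget and no long red path exists at all. So the strengthened inductive hypothesis (that the monochromatic path can be forced to begin and end in prescribed small sets) cannot be maintained, even for $b=2$. The secondary pairing recursion does not rescue this: merging colors $\{1,2\}$ into a super-color produces a super-monochromatic path whose vertices are an arbitrary mixture of colors $1$ and $2$, and a $2$-colored ordered path has no nontrivial monochromatic subpath in general, so you cannot extract a genuinely monochromatic $P_s$ from it.

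What the paper does instead is to abandon the attempt to control \emph{where} the path ends and instead exploit \emph{how long} it is. Between any two copies of $G_{\ell-1}$ one places a sparse bipartite graph $H(n,m)$ with the property that any two $m$-subsets of the two sides are joined by an edge; such a graph exists with only $O\!\big(\tfrac{n^2}{m}\log\tfrac{n}{m}\big)$ edges. The inductive statement is just that $G_{\ell-1}$ contains a monochromatic path of length at least $L_{\ell-1}$, with $m$ chosen to be a small constant fraction of $L_{\ell-1}$. Then the last $m$ vertices of one copy's long path and the first $m$ vertices of the next copy's long path are $m$-sets of the correct color automatically (they are part of the paths), and $H(n,m)$ supplies a joining edge while losing at most $2m$ vertices. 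In other words, the ``interface'' is not a separate gadget at all but the tail of the long path itself, and the wiring is a pseudorandom bipartite graph rather than a complete one. This is the missing idea in your plan; once you have it, the recursion goes through for all $b$ simultaneously with no color-pairing step.
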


As mentioned at the end of Section~\ref{section:vertex-coloring}, a bound of this form can be used to obtain an upper bound on $\tilde r(P_r,P_s)$.  
Theorem~\ref{thm:vertex-coloring} is not sufficiently strong to improve on the bound of Theorem~\ref{thm:size-ramsey} in this way, but a stronger bound of this type would allow us to deduce a better upper bound on the ordered size Ramsey number of ordered paths.

Finally, in Section 5, we consider $ \tilde r(P_n;q)$, the minimum number of edges in an ordered graph for which any $q$-coloring of the edges contains a monochromatic ordered $P_n$. Using proofs similar to those in Sections 2 and 3, we obtain upper and lower bounds on $r(P_n;q)$ which agree up to a polylogarithmic factor. 

\begin{theorem}
	\label{thm:multicolor}
	There exists an absolute constant $C > 0$ such that for all $n \ge 2$ and $q \ge 2$,
	\[
		\frac{n^{2q-1}}{2^{2q-1}(q-1)!} \le \tilde r(P_n;q) \le C n^{2q-1} (\log n)^3.
	\]
\end{theorem}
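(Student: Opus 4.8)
The plan is to promote the inhomogeneous random graph construction of Section~2 from two colours to $q$ colours. I would take a host ordered graph $H$ whose vertex set is stratified into $q$ "scales", with the probability attached to an edge depending on which scales contain its two endpoints, calibrated so that conditioning on the colour class of any one colour produces, inside a suitable vertex subset, a fresh copy of the analogous $(q-1)$-scale structure. The argument then peels colours one at a time. Fix any $q$-colouring and, for colour $q$, let $\ell_q(v)$ be the length of the longest monochromatic increasing path in colour $q$ ending at $v$; either some $\ell_q(v)=n$ and we are done, or every level set of $\ell_q$ spans no colour-$q$ edge, hence is effectively $(q-1)$-coloured. By the design of $H$, some such level set contains, with high probability, a subgraph that is $(q-1)$-Ramsey for $P_n$, so we recurse, the base case $q=1$ being a single path $P_n$. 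A direct computation with the edge-probability profile gives expected edge count $O(n^{2q-1}(\log n)^3)$, and a Chernoff bound provides concentration; for $q=2$ this is exactly the construction behind Theorem~\ref{thm:size-ramsey} with $r=s=n$.

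The delicate point on the upper-bound side is that the level set to which we pass is not controlled by us: it can depend on the whole colouring, so we really need $H[S]$ to be $(q-1)$-Ramsey for \emph{every} sufficiently large vertex set $S$, which costs a union bound over up to $2^{|V(H)|}$ sets $S$. Propagating this through all $q$ scales, while keeping the polylogarithmic factor equal to $(\log n)^3$ and the leading constant independent of $q$ (as the theorem asserts), forces a careful choice of the probability profile: the "reservoir" of random edges surviving into a level set must remain polynomially large in $n$ at every scale, so that the failure probability at each scale is $e^{-\Omega(|V(H)|)}$ with enough room to absorb the union bound. This is the technical content that must be imported from Section~2 and strengthened.

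\textbf{Lower bound.} Here I would run the edge-colouring algorithm of Section~3 (an adaptation of Reimer's argument) with $q$ colours. Given an ordered graph $H$ with few edges, process its edges in increasing order of their right-hand endpoint; then, when the edges entering a vertex $v$ from the left are processed, the values $\ell_c(v)$ (length of the longest monochromatic increasing path in colour $c$ ending at $v$) are already final. Colour greedily so as to keep every $\ell_c(v)\le n-1$: an incoming edge $uv$ may be given colour $c$ unless $\ell_c(u)=n-1$, and a vertex $u$ becomes \emph{stuck} only if it is forced — by its own incoming edges, which were themselves forced, and so on — into the state $\ell_1(u)=\dots=\ell_q(u)=n-1$ while still having an outgoing edge. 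One shows that a graph which forces this configuration must have at least $\tfrac{n^{2q-1}}{2^{2q-1}(q-1)!}$ edges by tracking a potential $\Phi=\sum_v\varphi(\ell_1(v),\dots,\ell_q(v))$ for a suitable polynomial weight $\varphi$ on $\{0,\dots,n-1\}^q$: each edge increases $\Phi$ by a bounded amount, reaching a stuck vertex forces $\Phi$ past a threshold, and summing $\varphi$ over the relevant "staircase" of states yields the bound. For $q=2$ this recovers the $\tfrac18 r^2 s$ bound of Section~3.

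The main obstacle on the lower-bound side is the bookkeeping that links "$H$ is $q$-Ramsey" to "$\Phi$ must be large", since the interaction between the $q$ colours must be controlled: with $q\ge 3$, an incoming edge of $u$ is forced into colour $c$ only when its source is already maximal in the remaining $q-1$ colours, so the forcing chains branch in a genuinely $q$-dimensional way, and the combinatorial identity turning the resulting sum over a $q$-dimensional simplex into $\tfrac{n^{2q-1}}{2^{2q-1}(q-1)!}$ — possibly only after optimizing the threshold, e.g.\ stopping each coordinate at $n/2$ rather than $n-1$ — has to be carried out with care. Beyond these two points, the whole argument is a direct transcription of Sections~2 and~3.
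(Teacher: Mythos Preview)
Your overall strategy is in the right neighbourhood, but on both sides you are making the argument much harder than the paper does, and the difficulties you flag are artifacts of your chosen route rather than genuine obstacles.

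\textbf{Upper bound.} The paper does \emph{not} peel colours one at a time or build a $q$-scale inhomogeneous random graph, and so it never faces the union bound over $2^{|V(H)|}$ level sets that worries you. Instead it pigeonholes on the entire $(q-1)$-tuple at once: take $N=4n^q$ vertices, build the same union $G=\bigcup G_i$ as in Lemma~\ref{lemma:random-graph} with $m_i$ rescaled by $n^{q-1}$, and for an arbitrary $q$-colouring assign to each vertex the vector $(c_1(v),\dots,c_{q-1}(v))$ of longest-path lengths in the first $q-1$ colours. Some fibre $A$ has at least $N/n^{q-1}=4n$ vertices, every edge inside $A$ has colour $q$, and the path-merging induction of Lemma~\ref{lemma:induction} runs verbatim in $G[A]$. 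There is no recursion on $q$, no nested Ramsey property to maintain, and the polylogarithmic factor stays at $(\log n)^3$ automatically. Your recursive approach may be salvageable, but keeping the log power bounded and the constant independent of $q$ through $q$ layers of union bounds is not obviously possible; the paper simply sidesteps the issue.

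\textbf{Lower bound.} The paper also does not use a potential function. It copies Section~3 directly: set $d=\binom{n/2+q-1}{q-1}$, let $U_0$ be the vertices with at least $d$ left-neighbours (so $|U_0|\le (n/2)^q$), and give $G[U_0]$ a $q$-colour Erd\H os--Szekeres colouring with no monochromatic $P_{n/2}$. The rest is properly $d$-vertex-colourable; identify the $d$ vertex colours with weak compositions $(c_1,\dots,c_q)$ of $n/2$ and colour an edge by the first coordinate in which its endpoints differ. Then no monochromatic path has more than $n$ vertices, and this handles every ordered graph with at most $d(n/2)^q\ge \frac{n^{2q-1}}{2^{2q-1}(q-1)!}$ edges. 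The ``$q$-dimensional simplex identity'' you anticipate is just the count $d=\binom{n/2+q-1}{q-1}$ of such compositions; no branching forcing chains or weighted potentials are needed.
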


Throughout this paper, all logarithms are assumed to be natural, and we omit floor and ceiling signs in our proofs for convenience. 
For a graph $G$ and $S\subseteq V(G)$, we write $G[S]$ for the subgraph of $G$ induced by $S$ and $G-S$ for the subgraph of $G$ induced by the complement of $S$.
For vertices $u,v$ in an ordered graph $G$, let $d(u,v)$ denote the distance between $u$ and $v$ in the ordering of the vertices of $G$.

\section{Proof of the Upper Bound of Theorem~\ref{thm:size-ramsey}}

To prove the upper bound of Theorem~\ref{thm:size-ramsey}, we show the existence of an ordered graph on $n = 4rs$ vertices and $O(r^2 s (\log s)^3)$ edges for which any red-blue coloring of the edges contains either a red ordered $P_r$ or a blue ordered $P_s$. 

We assume throughout the proof that $s=2^t$ for some $t\in\naturals$; this does not change the asymptotic form of the upper bound. Additionally, we may assume that $s$ is sufficiently large, by choosing the constant $C$ in Theorem~\ref{thm:size-ramsey} so that the upper bound holds for small~$s$.

For $i \ge 0$, define the parameters 
\[
	p_i = 2^{-i},\quad \ell_i = 2^{i+3}t,\quad \text{and}\quad m_i = 2^{i+7}r t^2.
\]
Let $k$ be the integer satisfying $\frac n2 < m_k \le n$. Since $m_{t} = 2^7rs t^2 > n$, we have $k < t$. However, for $s$ sufficiently large, we have $k \ge 0$, since $m_0 = 2^7 rt^2 < 4rs$ for large $s$.

\begin{lemma}
	\label{lemma:random-graph}
	Assume $s$ is sufficiently large. For all $0 \le i \le k$, there exists an ordered graph $G_i$ on $n$ vertices with at most $2n m_i p_i$ edges such that between any two disjoint sets $S_1, S_2 \subseteq V(G_i)$ with $|S_1| = |S_2| = \ell_i$ and $\max\{d(u,v) : u \in S_1, v \in S_2\} \le m_i$, there is at least one edge.
\end{lemma}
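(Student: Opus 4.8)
The plan is to construct each $G_i$ as an \emph{inhomogeneous random graph}, where the edge probability depends on the distance between the endpoints in the vertex ordering, and then to show that the desired property holds with positive probability.

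\textbf{Setup.} Fix $i$ with $0 \le i \le k$. For each pair of vertices $u, v \in \{1, 2, \ldots, n\}$, I would include the edge $uv$ independently with probability $q_{uv}$, where $q_{uv}$ is chosen to be roughly $c / (d(u,v) \cdot \ell_i)$ (capped at $1$) for an appropriate constant $c$ — so that pairs at distance at most $m_i$ contribute the right edge count, and the probability decays as the distance grows. The point of this choice is twofold: first, the expected number of edges is of order $\sum_{u < v} q_{uv} \approx \sum_{j=1}^{m_i} (n/j)(c/(j\ell_i)) \cdot \ldots$, which should come out to $O(n m_i p_i)$ after checking that $m_i p_i = 2^7 r t^2$ is the relevant scale and that $\ell_i = 2^{i+3} t$ and $p_i = 2^{-i}$ make the arithmetic work; second, for any two sets $S_1, S_2$ of size $\ell_i$ with all cross-distances at most $m_i$, the probability that \emph{no} edge appears between them is $\prod_{u \in S_1, v \in S_2}(1 - q_{uv}) \le \exp\left(-\sum_{u \in S_1, v \in S_2} q_{uv}\right)$, and since there are $\ell_i^2$ terms each of size at least $\approx c/(m_i \ell_i)$, this is at most $\exp(-c \ell_i^2 /(m_i \ell_i)) = \exp(-c\ell_i/m_i)$ — wait, that is too weak, so in fact one must be more careful and use that \emph{most} cross-pairs are at distance much smaller than $m_i$, or alternatively partition $[1,n]$ into blocks and argue block-by-block.

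\textbf{Union bound.} The number of pairs $(S_1, S_2)$ to consider is at most $\binom{n}{\ell_i}^2 \le n^{2\ell_i}$. I would show that the failure probability for a fixed pair is smaller than $n^{-2\ell_i}$, so that a union bound gives a positive probability that $G_i$ has the connectivity property; simultaneously I would use Markov's inequality to ensure that the edge count is at most twice its expectation (hence at most $2 n m_i p_i$) with probability at least $1/2$, so a $G_i$ with both properties exists. The key inequality to verify is therefore $\exp(-\sum_{u \in S_1, v \in S_2} q_{uv}) < n^{-2\ell_i} = e^{-2\ell_i \log n}$, i.e.\ $\sum_{u \in S_1, v \in S_2} q_{uv} > 2\ell_i \log n$. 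Since $\log n = \log(4rs) = O(\log s) = O(t)$ and $\ell_i = 2^{i+3}t$, the right-hand side is $O(2^i t^2)$; I need the left-hand side, a sum of $\ell_i^2 = 2^{2i+6}t^2$ terms, to beat this — so each term needs to be at least of order $2^{-i}/\ell_i \cdot (\text{something}) $... the bookkeeping must be arranged so that $\sum q_{uv} \gtrsim \ell_i^2/(m_i/\ell_i) \cdot$ (correction), matching $q_{uv} \approx 1/(d(u,v)\ell_i) \ge 1/(m_i \ell_i)$ giving $\sum \ge \ell_i^2/(m_i\ell_i) = \ell_i/m_i = 2^{i+3}t/(2^{i+7}rt^2) = 1/(16rt)$, which is far too small. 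This signals that the naive uniform lower bound $q_{uv} \ge 1/(m_i\ell_i)$ is lossy and one must instead exploit that within $S_1 \cup S_2$, which spans an interval of length at most $m_i + \text{(diameters)}$, a positive fraction of the $u \in S_1$ and $v \in S_2$ pairs are at distance $O(\ell_i)$ — more precisely, sort $S_1$ and $S_2$ and pair them up greedily so that roughly $\ell_i/2$ of the cross-pairs have $d(u,v) = O(m_i/\ell_i \cdot j)$ on the $j$-th scale, turning the sum into something like $\sum_{j} (\ell_i/j) \cdot (c \ell_i / (m_i/\ell_i \cdot j \cdot \ell_i)) \approx (c\ell_i^2/m_i)\sum_j 1/j^2 = \Theta(\ell_i^2/m_i) = \Theta(2^i t)$; one would then need $2^i t \gg 2^i t^2$, which \emph{still} fails — so in fact the correct edge probability must be larger, namely $q_{uv} \approx \min\{1, c\log n/(d(u,v))\}$ with \emph{no} $\ell_i$ in the denominator, making the edge count $\sum_j (n/j)(c\log n / j) \cdot$ hmm.

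\textbf{Main obstacle.} The heart of the proof — and the step I expect to be genuinely delicate — is calibrating the inhomogeneous edge probabilities $q_{uv}$ so that simultaneously (a) the total expected number of edges is $O(n m_i p_i)$, and (b) for every pair of $\ell_i$-sets at mutual distance $\le m_i$, the cross-edge-probability sum exceeds $2\ell_i \log n$, which is what the union bound over $\binom{n}{\ell_i}^2$ choices demands. The tension is that (a) wants the probabilities small while (b) wants them large, and the slack is provided precisely by the specific choices $p_i = 2^{-i}$, $\ell_i = 2^{i+3}t$, $m_i = 2^{i+7}rt^2$ with $t = \log_2 s$ and $n = 4rs$; getting the constants to line up (including the observation that $S_1 \cup S_2$ lies in an interval of length $\le 2m_i$ or so, so that the relevant distances are controlled) is where the real work lies. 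I would set $q_{uv} = \min\{1,\ c\, t / d(u,v)\}$ for a suitable absolute constant $c$ — noting $d(u,v) \le m_i$ for the relevant pairs gives $q_{uv} \ge ct/m_i$, hence $\sum_{u \in S_1, v\in S_2} q_{uv} \ge \ell_i^2 \cdot ct/m_i = 2^{2i+6}t^2 \cdot ct/(2^{i+7}rt^2) = c\, 2^{i-1} t / r$, which needs to exceed $2\ell_i \log n = 2\cdot 2^{i+3}t \cdot \log(4rs)$; since $\log(4rs) \le 4t$ for $s$ large and $r \le s$, this requires $c\, 2^{i-1}t/r \ge 2^{i+6} t \cdot 4t / $ — the $r$ in the denominator on the left versus nothing on the right is the obstruction, confirming that one cannot use the crude bound $d(u,v)\le m_i$ and must instead split $S_1\cup S_2$ into dyadic distance scales and sum $\sum_{j\ge 1} (\ell_i/2^j)\cdot(ct 2^j/m_i)\cdot$(number of scales), carefully exploiting that consecutive elements of $S_1$ within the spanning interval of length $\le 2m_i$ are on average $\le 2m_i/\ell_i$ apart, so a positive fraction of cross-pairs sit at distance $O(m_i/\ell_i)$, giving $\sum q_{uv} \gtrsim \ell_i^2 \cdot ct/(m_i/\ell_i) = ct\ell_i^3/m_i$; plugging in, $ct \cdot 2^{3i+9} t^3 / (2^{i+7} r t^2) = c\, 2^{2i+2} t^2 / r$, which beats $2^{i+4} t \cdot 4t = 2^{i+6}t^2$ precisely when $c\,2^{2i+2}/r \ge 2^{i+6}$, i.e.\ $2^i \ge 16 r/c$ — true for $i$ near $k$ but possibly false for small $i$, which is presumably why the statement only needs to hold for $0 \le i \le k$ and why the authors chose $k$ with $m_k > n/2$; I would track through exactly which range of $i$ the argument covers and confirm it matches $[0,k]$, adjusting the constant $c$ and the "positive fraction" estimate as needed.

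\textbf{Edge count.} Finally, with $q_{uv} = \min\{1, ct/d(u,v)\}$, the expected number of edges is $\sum_{u<v} q_{uv} = \sum_{j=1}^{n-1}(n-j)\min\{1,ct/j\} \le n\left(ct + \sum_{j=ct+1}^{n} ct/j\right) \le n\, ct(1 + \log(n/(ct))) = O(n t \log n) = O(nt^2)$, and one checks this is $O(n m_i p_i) = O(n \cdot 2^{i+7}rt^2 \cdot 2^{-i}) = O(n r t^2)$ — indeed $nt^2 \le n r t^2$ since $r \ge 1$, with room to spare, so the Markov step goes through comfortably. This completes the plan: define the inhomogeneous random graph with these $q_{uv}$, bound the bad-event probability for a fixed $(S_1,S_2)$ pair via the exponential inequality and the dyadic-scale estimate, union-bound over the $\le n^{2\ell_i}$ pairs, intersect with the high-probability edge-count event, and extract the desired $G_i$.
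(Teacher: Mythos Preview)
Your proposal has a genuine gap: it never reaches a working argument. You repeatedly try distance-dependent edge probabilities $q_{uv} \approx ct/d(u,v)$ (or $c/(d(u,v)\ell_i)$), discover each time that the union bound fails, and end with a dyadic-scale sketch that you yourself note only covers $i$ with $2^i \gtrsim r$, not the full range $0 \le i \le k$. The ``positive fraction of cross-pairs at distance $O(m_i/\ell_i)$'' claim is also unjustified: nothing in the hypothesis prevents $S_1$ from consisting of the leftmost $\ell_i$ points of an interval and $S_2$ of the rightmost $\ell_i$ points, with every cross-distance of order $m_i$.

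The fix is to drop the inhomogeneity within a single $G_i$ entirely. The paper takes the edge probability to be the \emph{constant} $p_i = 2^{-i}$ for every pair $u,v$ with $d(u,v) \le m_i$ (and $0$ otherwise). Then for any admissible pair $(S_1,S_2)$, all $\ell_i^2$ cross-pairs are potential edges, each present independently with probability $p_i$, so the no-edge probability is exactly $(1-p_i)^{\ell_i^2} \le \exp(-p_i\ell_i^2)$. The crucial arithmetic is
\[
p_i \ell_i^2 = 2^{-i}\cdot 2^{2i+6}t^2 = 2^{i+6}t^2,
\qquad
2\ell_i \log n \le 2\cdot 2^{i+3}t \cdot \log(4s^2) = 2^{i+4}t(2\log s + \log 4),
\]
and since $\log s = t\log 2$, the first dominates the second for all $i$ simultaneously --- the factor of $r$ that blocked your attempts never appears, because the uniform probability $p_i$ gives you the full $\ell_i^2$ many terms each of size $p_i$, rather than $\ell_i^2$ terms of size $ct/m_i$. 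The expected edge count is at most $n m_i p_i$, and Markov plus the union bound finish as you outlined. (The word ``inhomogeneous'' in the paper's introduction refers to the union $G = \bigcup_i G_i$, whose layers have different densities and distance thresholds; each individual $G_i$ is a homogeneous random subgraph of a distance graph.)
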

\begin{proof}
When $i=0$, then $p_0 = 1$ and we may take $G_0$ to be the graph which contains an edge $uv$ whenever $d(u,v) \le m_0$.

When $1 \le i \le k$, we take $G_i$ to be the random ordered graph on $n$ vertices which contains an edge $uv$ with probability $p_i$ when $d(u,v) \le m_i$ and with probability $0$ otherwise. We show that with positive probability, such a graph satisfies both of the desired properties.

The expected number of edges in $G_i$ is less than $n m_i p_i$, so with probability at least $\frac12$, $G_i$ contains no more than $2nm_ip_i$ edges.

The expected number of ``bad" pairs of sets $(S_1,S_2)$ violating the conclusion of the lemma is bounded by
\begin{align*}
\binom{n}{\ell_i}^2 (1-p_i)^{\ell_i^2} 
	&\le \exp\left(2\ell_i \log (4rs) - p_i \ell_i^2\right) \le \exp\left(2^{i+4}t \log (4s^2) - 2^{i+6} t^2\right)\\
	&\le \exp\left(2^{i+4}t (2\log s + \log 4 - 4t)\right) \le \exp(-2^{i+4}t) \le s^{-16}
\end{align*}
for $s \ge 9$, where we have $2\log s + \log 4 - 4t < -1$.

Therefore, a randomly chosen $G_i$ contains a bad pair with probability at most $s^{-16}$. With probability at least $\frac12 - s^{-16} > 0$, $G_i$ satisfies both properties: it has fewer than $2nm_ip_i$ edges and no bad pairs $(S_1,S_2)$. In particular, some such ordered graph $G_i$ must exist.
\end{proof}

Let $G$ be the union of the ordered graphs $G_0, G_1, \dots, G_k$ constructed in Lemma~\ref{lemma:random-graph}. This graph has at most
\[
	\sum_{i=0}^k 2nm_i p_i = \sum_{i=0}^k 2(4rs)(2^{i+7}rt^2)(2^{-i}) = 2^{10}(k+1)r^2st^2 \le C r^2 s (\log s)^3
\]
edges, for some constant $C$. We will show that every red-blue coloring of the edges of $G$ contains a red $P_r$ or a blue $P_s$, proving the upper bound of Theorem~\ref{thm:size-ramsey}.

Fix an arbitrary red-blue coloring of the edges of $G$. For every $v \in V(G)$, denote by $R(v)$ the length of the longest red path ending on $v$. If there is a vertex $v$ with $R(v) \ge r$, then we are done, so we assume that $R(v)$ has values in $\{0, 1, \dots, r-1\}$.

By the pigeonhole principle, there must be some set $A \subseteq V(G)$ of size at least $\frac{n}{r} = 4s$ such that $R(u) = R(v)$ for all $u,v \in A$. From now on, we work only in the graph $G[A]$. Every edge of $G[A]$ must be blue, since a red edge $uv$ in $G[A]$ would imply $R(v) \ge R(u) + 1$. Therefore, it is sufficient to find a $P_s$ in $G[A]$ to prove the theorem.

We say that two paths are \emph{non-overlapping} if every vertex of one precedes every vertex of the other.

\begin{lemma}
	\label{lemma:induction}
	For $0 \le j \le k$, we can find a collection $\mathcal P_j$ of pairwise non-overlapping paths in $G[A]$ satisfying the following conditions:
	\begin{enumerate}[label=(\alph*)]
	\item \label{c:num-vertices} Together, the paths in $\mathcal P_j$ include at least $(4 - \frac jk)\cdot s$ vertices of $A$.
	\item \label{c:num-paths} The number of paths in $\mathcal P_j$ is at most $\frac{n}{m_j} + 1$.
	\end{enumerate}
\end{lemma}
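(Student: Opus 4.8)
The plan is to prove Lemma~\ref{lemma:induction} by induction on $j$, strengthening the statement so that the induction closes cleanly. I would fix once and for all the partition of $V(G)$ into consecutive ``level-$j$ blocks'' of $m_j$ vertices each (the last one possibly shorter); because $m_j=2^jm_0$, each level-$(j+1)$ block is exactly the union of two consecutive level-$j$ blocks. The extra invariant I would maintain is that each path of $\mathcal P_j$ lies inside a single level-$j$ block and each level-$j$ block contains at most one path of $\mathcal P_j$. Given this, the paths are automatically pairwise non-overlapping and condition~\ref{c:num-paths} comes for free, since there are at most $\frac{n}{m_j}+1$ level-$j$ blocks; only condition~\ref{c:num-vertices} will require real work.

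For the base case $j=0$, inside a level-$0$ block the vertices of $A$ are pairwise at distance at most $m_0$, hence form a clique in $G_0\subseteq G$, so I take a Hamiltonian path on them; together these paths cover all of $A$, hence at least $4s$ vertices. For the step from $j$ to $j+1$, I merge within each pair of level-$j$ blocks. Let $B=B'\cup B''$ be a level-$(j+1)$ block with $B'$ preceding $B''$, carrying paths $P'$ and $P''$. If both $P'$ and $P''$ have at least $\ell_{j+1}$ vertices, I set $S_1$ to be the last $\ell_{j+1}$ vertices of $P'$ and $S_2$ the first $\ell_{j+1}$ vertices of $P''$; since $S_1,S_2\subseteq B$ and $|B|\le m_{j+1}$, Lemma~\ref{lemma:random-graph} produces an edge $uv$ of $G_{j+1}$ with $u\in S_1$, $v\in S_2$, and I splice $P'$ up to $u$, the edge $uv$, and $P''$ from $v$ onward into a single path lying in $B$. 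This is a legitimate monotone increasing path (as $B'$ precedes $B''$), it uses only vertices and edges of $G[A]$ and so is blue, and it omits fewer than $2\ell_{j+1}$ vertices. If instead one of $P',P''$ is short — fewer than $\ell_{j+1}$ vertices — I discard the shorter one and keep the other, losing fewer than $\ell_{j+1}$ vertices. Either way $B$ is left with at most one path, and the invariant is restored.

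The remaining and most delicate point is condition~\ref{c:num-vertices}. Summing the per-block loss over the at most $\frac{n}{m_{j+1}}+1$ level-$(j+1)$ blocks, the step $j\to j+1$ costs at most $\bigl(\frac{n}{m_{j+1}}+1\bigr)\cdot 2\ell_{j+1}$ vertices. The values $m_i=2^{i+7}rt^2$ and $\ell_i=2^{i+3}t$ are chosen precisely so that the leading term $\frac{n}{m_{j+1}}\cdot 2\ell_{j+1}$ equals $\frac{s}{2t}$, while the leftover term $2\ell_{j+1}=2^{j+5}t$ grows only geometrically; since $m_k\le n$ is equivalent to $2^{k+5}t^2\le s$, a short computation (using that $s$ is large and $k\le t$) shows the accumulated loss after $j$ steps stays below $\frac{js}{2t}$ plus a lower-order term, hence below $\frac{js}{k}$, which is exactly the slack that condition~\ref{c:num-vertices} permits. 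I expect this bookkeeping to be the crux: we must roughly halve the number of paths at each of the $k\approx t=\log_2 s$ steps while spending strictly fewer than $\frac{s}{k}$ vertices per step, and it is only the geometric growth of the $m_i$ and $\ell_i$, together with the cushion between $k$ and $t$ forced by the definition of $k$, that makes the two sides fit.

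For completeness, the lemma finishes the upper bound of Theorem~\ref{thm:size-ramsey}: applying it with $j=k$ and using $m_k>\frac n2$, the collection $\mathcal P_k$ consists of at most two non-overlapping paths in $G[A]$ spanning together at least $3s$ vertices, so one of them has at least $\frac{3s}{2}\ge s+1$ vertices and hence contains a blue ordered $P_s$.
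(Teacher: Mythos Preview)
Your proposal is correct and would give a complete proof, but it organises the induction differently from the paper. You impose a fixed dyadic block structure on $V(G)$ and maintain the invariant that each level-$j$ block hosts at most one path; merging then happens only inside sibling pairs of level-$j$ blocks. The paper instead works without any block partition: at stage $j$ it deletes all paths with at most $2\ell_j$ vertices and then repeatedly splices any two consecutive surviving paths whose ``gap'' (distance between the $\ell_j$\textsuperscript{th}-to-last vertex of one and the $\ell_j$\textsuperscript{th} vertex of the next) is below $m_j$. Your block invariant buys you condition~\ref{c:num-paths} for free, whereas the paper has to argue separately that the remaining gaps are disjoint intervals of length at least $m_j$. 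On the other hand, the paper's accounting for condition~\ref{c:num-vertices} is cleaner: the number of delete-or-splice operations at stage $j$ is at most $\tfrac{n}{m_{j-1}}$ (the drop in path count), each costing at most $2\ell_j$ vertices, so the stage loses exactly $\tfrac{n}{m_{j-1}}\cdot 2\ell_j=\tfrac{s}{t}<\tfrac{s}{k}$, with no leftover geometric term. In your version the $+1$ in the block count produces the extra $2\ell_{j+1}$ per step, and you then need the observation $2^{k+5}t^2\le s$ (together with $s$ large) to absorb the accumulated $\sum_i 2\ell_i$ into the slack between $\tfrac{js}{2t}$ and $\tfrac{js}{k}$; this works, but it is the bookkeeping you yourself flag as delicate, and the paper simply sidesteps it.
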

\begin{proof}
We induct on $j$.

To construct $\mathcal P_0$, we choose the paths greedily, using only edges in $G_0[A]$. Start a path from the vertex $a_1$ and add vertices $a_2, a_3, \dots$, until we reach a vertex $a_i$ whose distance to $a_{i+1}$ exceeds $m_0$. When this happens, start another path from the vertex $a_{i+1}$ and proceed in the same way. 

After each path except possibly the last, there is a gap of at least $m_0$ vertices not present in $A$. Thus, there are at most $\frac{n}{m_0} + 1$ paths, and condition~\ref{c:num-paths} is satisfied. 
Since every vertex in $A$ is included in a path in $\mathcal P_0$, and $|A| \ge 4s$, condition~\ref{c:num-vertices} holds as well.

Next, for $1 \le j \le k$, we will use $\mathcal P_{j-1}$ to construct $\mathcal P_j$.

First, we remove all short paths from $\mathcal P_{j-1}$: paths that have at most $2\ell_j$ vertices. To simplify analysis for large $j$, if all paths have at most $2\ell_j$ vertices, we keep an arbitrary path.

Second, we combine paths that are close together into a single path. More precisely, define the \emph{gap} between paths $P$ and $Q$, with $P$ preceding $Q$, to be the distance between the $\ell_j$\textsuperscript{th}-to-last vertex of $P$ and the $\ell_j$\textsuperscript{th} vertex of $Q$. Whenever two consecutive paths $P$ and $Q$ have a gap between them which is shorter than $m_j$, we combine them into one path.

This is always possible by applying Lemma~\ref{lemma:random-graph}, which guarantees that there is an edge in $G_j$ between the last $\ell_j$ vertices of $P$ and the first $\ell_j$ vertices of $Q$. The resulting path using this edge might skip some of the last vertices of $P$ and some of the first vertices of $Q$, but we lose fewer than $\ell_j$ vertices from each: fewer than $2\ell_j$ vertices total. Together, $P$ and $Q$ have more than $4\ell_j$ vertices, so the combined path still has more than $2\ell_j$ vertices, which means we can continue combining paths until no more paths have a gap shorter than $m_j$.

After we are done, we verify that the resulting collection of paths $\mathcal P_j$ satisfies the conditions of the lemma. First, since we start with at most $\frac{n}{m_{j-1}} + 1$ paths and are left with at least one path, we perform at most $\frac{n}{m_{j-1}}$ steps of either deleting a short path or combining two paths. Each step discards at most $2\ell_j$ vertices, so we discard at most 
\[
	2\ell_j \cdot \frac{n}{m_{j-1}} = 2^{j+4} t \cdot \frac{4rs}{2^{j+6}rt^2} = \frac{s}{t} < \frac{s}{k}
\] vertices in total. Therefore the paths in $\mathcal P_j$ still include at least $(4 - \frac{j-1}{k}) \cdot s  - \frac sk = (4 - \frac jk) \cdot s$ vertices of $A$, and condition~\ref{c:num-vertices} holds.

Second, we know that all remaining gaps between paths are at least $m_j$ in length. Also, the gap before a path $P$ ends at its $\ell_j$\textsuperscript{th} vertex, and the gap after $P$ starts at its $\ell_j$\textsuperscript{th}-to-last vertex. Since $P$ has at least $2\ell_j$ vertices, each gap ends before the next gap starts, and hence the gaps represent disjoint intervals of at least $m_j$ vertices. Therefore, the number of gaps can be at most $\frac{n}{m_j}$, which means there are at most $\frac{n}{m_j}+1$ paths, verifying condition~\ref{c:num-paths}. This completes the induction step.
\end{proof}

After $k$ steps, we have at most $\frac{n}{m_k} + 1 < 3$ paths which together include at least $3s$ vertices of $A$. As a result, one of the paths has length at least $s$, completing the proof of the upper bound of Theorem~\ref{thm:size-ramsey}.

\section{Proof of the Lower Bound of Theorem~\ref{thm:size-ramsey}}

We prove the lower bound of Theorem~\ref{thm:size-ramsey} by extending an argument of Reimer \cite{Re}.
Our innovation is to use an Erd\H{o}s--Szekeres coloring on a dense subgraph of the given ordered graph $G$.

\begin{proof}[Proof of the Lower Bound of Theorem~\ref{thm:size-ramsey}]
Let $d < r$ be a parameter to be chosen later, and let $G$ be an arbitrary ordered graph with at most $d(r-d)(s-d)$ edges.

Define $U_0$ to be the set of vertices with at least $d$ neighbors preceding them in the order on $G$. Then $|U_0|\leq\frac1d|E(G)| = (r-d)(s-d)$. Apply an Erd\H{o}s--Szekeres coloring to $G[U_0]$: partition $U_0$ into $s-d$ consecutive blocks of size $r-d$, color edges within each block red, and color edges between different blocks blue. Then $G[U_0]$ does not contain a red path on more than $r-d$ vertices or a blue path on more than $s-d$ vertices.

The remaining graph $G - U_0$ has chromatic number at most $d$, by a greedy coloring that considers the vertices as they are ordered in $G$. Let $U_1, U_2, \dots, U_d$ be the color classes of a proper $d$-coloring of $G - U_0$.

To color the remaining edges of $G$, let $uv$ be an edge with $u \in U_i$, $v \in U_j$, and $u$ preceding $v$ in the order on $V(G)$. Color the edge $uv$ blue if $i<j$, otherwise red.

A red path may visit a single vertex in $U_d$, then a single vertex in $U_{d-1}, U_{d-2}$, and so on; finally, it may visit up to $r-d$ vertices in $U_0$. Therefore, the red path can have at most $r$ vertices. Similarly, a blue path can have at most $s$ vertices.

Recall that we assume $r \le s$. If we set $d = \frac12r$, then this argument gives an algorithm for coloring  the edges of any ordered graph on at most $\frac r2 \cdot \frac r2 \cdot (s - \frac r2) \ge \frac18 r^2s$ edges without creating a red $P_r$ or blue $P_s$.
\end{proof}

Note that the optimal choice of $d$ ranges from $\frac13r$ when $r=s$ to $\frac12r$ when $r \ll s$, but this only affects the constant.

\section{Proof of Theorem~\ref{thm:vertex-coloring}}
\label{section:vertex-coloring}

We prove Theorem~\ref{thm:vertex-coloring} by iteratively constructing a sequence of ordered graphs $G_1, \ldots, G_t$ where $t=\sqrt{\log s}$ in which the last graph $G_t$ has the desired property that every $b$-coloring of the vertices of $G_t$ contains a monochromatic ordered path of length $s$.

In order to describe this sequence, we must first define a bipartite graph which will appear in our construction.
The existence of this bipartite graph is a standard fact in random graph theory following from the first moment method.

{\lemma Let $n,m\in\naturals$ and $m\leq \frac{n}{2}$. There exists a bipartite graph with classes of size $n$, at most $O(\frac{n^2}{m}\log\frac{n}{m})$ edges, and the property that there is some edge between any pair of $m$-sets of the partite classes.}

For each $n$ and $m$, fix such a graph and denote it by $H(n,m)$. 
Now we are ready to prove Theorem~\ref{thm:vertex-coloring}.

\begin{proof}[Proof of Theorem~\ref{thm:vertex-coloring}]

Let $A=2be^{\sqrt{\log s}}$ and $k_\ell=\left(\frac{A}{10b}\right)^{\ell}$ for $1\leq \ell\leq t$.
Define a sequence of ordered graphs as follows:
let $G_1$ be the ordered complete graph $K_{A}$. 
For $\ell\geq 2$, let $G_{\ell}$  be the ordered graph formed by placing $A$ copies of $G_{\ell-1}$ consecutively and including a copy of $H(A^{\ell-1},k_{\ell-1})$ between any two copies of $G_{\ell-1}$.

We show in the following two claims that every $b$-coloring of the vertices of $G_{\ell}$ contains a relatively long monochromatic ordered path compared to the size of the graph.
Applying this argument to $G_t$ with $t=\sqrt{\log s}$ will give our result.

\begin{claim}
	\label{claim:long-path}
	Any $b$-coloring of the vertices of $G_{\ell}$ contains a monochromatic ordered path of length at least $A^{\ell}\left(\frac{1}{2b}\right)^{\ell}$.
	\end{claim}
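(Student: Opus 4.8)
The plan is to prove Claim~\ref{claim:long-path} by induction on $\ell$. The base case $\ell = 1$ is immediate: $G_1 = K_A$ is a complete ordered graph on $A$ vertices, so any $b$-coloring of its vertices has a color class of size at least $A/b \geq A \cdot (1/(2b))^1$, and the vertices of that color class, taken in order, form a monochromatic increasing path. For the inductive step, I would assume every $b$-coloring of $G_{\ell-1}$ yields a monochromatic path of length at least $L_{\ell-1} := A^{\ell-1}(1/(2b))^{\ell-1}$, and analyze a $b$-coloring of $G_\ell$.

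**The key step** is the following. Given a coloring of $G_\ell$, each of the $A$ consecutive copies of $G_{\ell-1}$ contains, by induction, a monochromatic path of length $\geq L_{\ell-1}$; record its color. By pigeonhole, at least $A/b$ of these $A$ copies have their path in the same color, say red. Now I want to stitch together the red paths coming from these $\geq A/b$ copies into one long red path. To do this I need, between two copies of $G_{\ell-1}$ that I want to link, a red edge in the intervening $H(A^{\ell-1}, k_{\ell-1})$ joining a vertex near the end of the first red path to a vertex near the start of the second. The point of the defining property of $H(n,m)$ — an edge between any two $m$-sets — is that if I work with the last $k_{\ell-1}$ vertices of the earlier red path and the first $k_{\ell-1}$ vertices of the later red path (these lie in the two partite classes of the $H$-copy), I get at least one edge between them; but that edge need not be red. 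So instead I should pass to a color class: restrict attention to a fixed color among the $b$ options and take the $k_{\ell-1}$-sized tails/heads — actually the cleaner route is to note that since each red path has length $\geq L_{\ell-1}$ and $L_{\ell-1}$ will be comfortably larger than $b \cdot k_{\ell-1}$, some color (again by pigeonhole) appears on $\geq k_{\ell-1}$ vertices of each red path's tail-segment; but we must be careful that this color is red. The correct bookkeeping: within each of the $\geq A/b$ red copies, the red path has $\geq L_{\ell-1}$ vertices, all red; take its last $k_{\ell-1}$ vertices as a red $k_{\ell-1}$-set (valid since $L_{\ell-1} \geq k_{\ell-1}$) and its first $k_{\ell-1}$ vertices similarly. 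Between consecutive red copies, the $H$-graph gives an edge between the tail-set of one and the head-set of the next; since all these vertices are red, that edge connects two red vertices — but the edge colors here are vertex-colors, and the path only needs its \emph{vertices} monochromatic, so any edge of $G_\ell$ between two red vertices extends a red path. Thus we concatenate, losing at most $k_{\ell-1}$ vertices from each end of each segment.

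**The resulting length** is at least $(A/b)\cdot(L_{\ell-1} - 2k_{\ell-1})$, and I would check that this is $\geq A^\ell(1/(2b))^\ell = (A/(2b)) L_{\ell-1}$. This reduces to $L_{\ell-1} - 2k_{\ell-1} \geq \tfrac12 L_{\ell-1}$, i.e. $L_{\ell-1} \geq 4 k_{\ell-1}$, which I would verify from the definitions $L_{\ell-1} = A^{\ell-1}(2b)^{-(\ell-1)}$ and $k_{\ell-1} = (A/(10b))^{\ell-1}$: the ratio is $L_{\ell-1}/k_{\ell-1} = 5^{\ell-1} \geq 5$ for $\ell \geq 2$, which is more than enough. (The factor $10$ in the definition of $k_\ell$ is presumably chosen precisely to give this slack.)

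**The main obstacle** I anticipate is getting the stitching bookkeeping exactly right: ensuring that the segments of the $H$-graph's partite classes I use to find a connecting edge really are subsets of the monochromatic paths produced by the inductive hypothesis, that the vertex counts $A^{\ell-1}$ match the sizes of the copies of $G_{\ell-1}$, and that concatenating $\geq A/b$ segments while discarding $\leq k_{\ell-1}$ vertices per junction still leaves the claimed length. None of this is deep, but it requires care with the off-by-one losses at each of the roughly $A/b$ junctions and with confirming $m \leq n/2$ so that $H(A^{\ell-1}, k_{\ell-1})$ is guaranteed to exist by the preceding lemma.
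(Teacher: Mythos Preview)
Your proposal is correct and follows essentially the same route as the paper: induct on $\ell$, use pigeonhole to find $\geq A/b$ copies of $G_{\ell-1}$ whose long monochromatic paths share a color, use the defining property of $H(A^{\ell-1},k_{\ell-1})$ to stitch consecutive such paths via an edge between the last $k_{\ell-1}$ vertices of one and the first $k_{\ell-1}$ of the next, and verify the arithmetic $(A/b)(L_{\ell-1}-2k_{\ell-1}) \geq (A/(2b))^{\ell}$. Your brief detour worrying about the ``color'' of the connecting edge is unnecessary --- this is a vertex-coloring problem, so any edge of $G_\ell$ between two red vertices suffices --- but you correct yourself and the final argument matches the paper's.
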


\begin{proof}
We prove this claim by induction on $\ell$. 

First, note that any $b$-coloring of the vertices of $G_1=K_A$ must contain a monochromatic ordered path of length at least $\frac{A}{b}$.
Assume the claim holds for $G_{\ell-1}$ and consider an arbitrary $b$-coloring of $V(G_{\ell})$.

For an ordered graph $G$, let $L(G)$ denote the maximum length of a monochromatic ordered path which is guaranteed in any $b$-coloring of the vertices of $G$.
We say that a path in $G_{\ell-1}$ is \emph{long} if it has length at least $A^{\ell-1}\left(\frac{1}{2b}\right)^{\ell-1}$.

By the inductive hypothesis, each copy of $G_{\ell-1}$ contains a monochromatic long path in one of the $b$ colors.
Therefore, without loss of generality, at least $\frac{A}{b}$ of the $A$ copies of $G_{\ell-1}$ contain a long path of color 1.
Denote these copies of $G_{\ell-1}$ by $H_1, H_2, \ldots, H_j$, noting that $j\geq\frac{A}{b}$.

Since each pair of copies of $G_{\ell-1}$ is connected by the bipartite graph $H(A^{\ell-1},k_{\ell-1})$ in our construction of $G_{\ell}$, there must be an edge between one of the last $k_{\ell-1}$ vertices of the long color-1-path in $H_i$ and the first $k_{\ell-1}$ vertices of the long color-1-path in $H_{i+1}$ for each $1\leq i\leq j-1$. 
Therefore, by connecting the long color-1-paths in consecutive ``color 1" copies of $G_{\ell-1}$ with edges from the bipartite graphs in this way, we can find an ordered color-1-path in $G_{\ell}$ of length at least $\frac{A}{b}\left(L(G_{\ell-1})-2k_{\ell-1}\right)$. 
Applying our inductive hypothesis, this yields a monochromatic path in $G_{\ell}$ of length at least 
\begin{align*}
L(G_{\ell}) \geq \frac{A}{b}\left(A^{\ell-1}\left(\frac{1}{2b}\right)^{\ell-1}-2\left( \frac{A}{10b}\right)^{\ell-1}\right) = \frac{A^\ell}{b^{\ell}}\left(\frac{1}{2^{\ell-1}}-\frac{1}{10^{\ell-1}}\right) \geq A^{\ell}\left(\frac{1}{2b}\right)^{\ell}.\qquad\qquad \qedhere
\end{align*}

\end{proof}

\begin{claim}
	\label{claim:few-edges}
	There are at most $A^{\ell+1}(20b)^{\ell-1}C'\ell \log b$ edges in $G_{\ell}$  for $1\leq \ell\leq t$ where $C'$ is a constant.
	\end{claim}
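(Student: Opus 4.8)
The plan is to write down the obvious recursion for $e(G_\ell)$ that comes straight from the construction, and then verify the stated bound by induction on $\ell$, choosing the constant $C'$ large enough to absorb the implied constant from the bipartite-graph lemma. First I would record the bookkeeping: since $G_1 = K_A$ has $A$ vertices and $G_\ell$ is built out of $A$ copies of $G_{\ell-1}$, the graph $G_\ell$ has $A^\ell$ vertices, so each copy of $G_{\ell-1}$ has exactly $A^{\ell-1}$ vertices, which is why $H(A^{\ell-1},k_{\ell-1})$ is the right bipartite graph to insert between two such copies. Reading off the construction, for $\ell \ge 2$ the graph $G_\ell$ is the disjoint union of $A$ copies of $G_{\ell-1}$ together with one copy of $H(A^{\ell-1},k_{\ell-1})$ for each of the $\binom A2$ pairs of copies, whence
\[
	e(G_\ell) = A\,e(G_{\ell-1}) + \binom{A}{2}\,e\bigl(H(A^{\ell-1},k_{\ell-1})\bigr).
\]

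Second, I would bound the new term using the lemma on $H(n,m)$. With $n = A^{\ell-1}$ and $m = k_{\ell-1} = (A/10b)^{\ell-1}$ one computes $n/m = (10b)^{\ell-1}$ and $n^2/m = A^{\ell-1}(10b)^{\ell-1}$, and the hypothesis $m \le n/2$ holds for $\ell \ge 2$ since $10b \ge 20$. Hence, for an absolute constant $C_0$,
\[
	e\bigl(H(A^{\ell-1},k_{\ell-1})\bigr) \le C_0\,A^{\ell-1}(10b)^{\ell-1}(\ell-1)\log(10b),
\]
and using $\binom A2 \le A^2/2$ the second term of the recursion is at most $\tfrac{C_0}{2}A^{\ell+1}(10b)^{\ell-1}(\ell-1)\log(10b)$.

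Third is the induction. The base case $\ell = 1$ is immediate, $e(G_1) = \binom A2 \le A^2/2 \le A^2 C'\log b$ once $C' \ge 1$. For the step, assuming $e(G_{\ell-1}) \le A^{\ell}(20b)^{\ell-2}C'(\ell-1)\log b$, substituting into the recursion and dividing by $A^{\ell+1}$ reduces the claim to
\[
	(20b)^{\ell-2}C'(\ell-1)\log b + \tfrac{C_0}{2}(10b)^{\ell-1}(\ell-1)\log(10b) \le (20b)^{\ell-1}C'\ell\log b .
\]
The first left-hand term is at most a $\tfrac{1}{20b}$-fraction of the right-hand side, while the second is at most $\tfrac{C_0\log(10b)}{2C'\log b}$ times the right-hand side, the point being that replacing $10b$ by $20b$ gains a factor $2^{\ell-1}\ge 1$; for $b \ge 2$ the ratio $\log(10b)/\log b$ is bounded by an absolute constant, so this contribution is a fixed multiple of $C_0/C'$. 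Choosing $C'$ to be a large enough absolute multiple of $C_0$ (and at least $1$) makes the two contributions sum to strictly less than the right-hand side, completing the induction for all $1 \le \ell \le t$.

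The entire argument is routine estimation; the one point that requires care — and the reason the claim is stated with base $20b$ rather than the $10b$ that the lemma literally produces — is that the geometric factor in the target bound must have a strictly larger base than the one coming from $H$, so that successive bounds shrink geometrically and absorb both the constant $C_0$ and the mismatch between $\log(10b)$ and $\log b$. I do not expect any genuine obstacle beyond this bookkeeping.
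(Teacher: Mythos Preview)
Your proposal is correct and follows essentially the same approach as the paper: set up the recursion $e(G_\ell)=A\,e(G_{\ell-1})+\binom{A}{2}e(H(A^{\ell-1},k_{\ell-1}))$, plug in the lemma's bound $e(H)=O\bigl(A^{\ell-1}(10b)^{\ell-1}(\ell-1)\log(10b)\bigr)$, and induct. Your handling of the constants is in fact cleaner than the paper's (which silently reuses the same symbol $C'$ after absorbing the ratio $\log(10b)/\log b$), and your closing remark about why the base must be $20b$ rather than $10b$ makes explicit the point the paper leaves implicit.
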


\begin{proof}
We again proceed by induction on $\ell$.
Note that $|E(G_1)|=\binom{A}{2}$ and
\begin{align*}
     |E(G_{\ell})| \leq A |E(G_{\ell-1})| + \binom{A}{2} |E(H(A^{\ell-1}, k_{\ell-1}  ))| \leq A |E(G_{\ell-1})|+ C' A^{\ell+1} (10b)^{\ell-1}\ell \log b,
\end{align*}
since $G_{\ell}$ consists of $A$ copies of $G_{\ell-1}$ connected by $\binom{A}{2}$ copies of $H(A^{\ell-1},k_{\ell-1})$, each of which contains at most $C A^{\ell-1} (10b)^{\ell-1}\ell \log 10b \leq C' A^{\ell-1} (10b)^{\ell-1}\ell \log b$ edges.
Our inductive hypothesis yields the following bound on the number of edges in $G_{\ell}$:
\begin{align*}
   |E(G_{\ell})| &\leq  A |E(G_{\ell-1})|+ C' A^{\ell+1} (10b)^{\ell-1}\ell \log b \\ &\leq  A^{\ell+1}(20b)^{\ell-2}C'\ell \log b + C' A^{\ell+1} (10b)^{\ell-1}\ell \log b\leq C' A^{\ell+1} (20b)^{\ell-1}\ell \log b. \qedhere
\end{align*}
\end{proof}

Applying these two claims to $G_t$ implies that any $b$-coloring of $V(G_t)$ contains a monochromatic ordered path of length at least $$(2be^{t})^{t}\left(\frac{1}{2b}\right)^t= e^{t^2}=s,$$ 
while the number of edges in $G_t$ is at most
 $$A^{t+1}(20b)^{t-1}C't \log b \leq sA (40b^2)^t C' t \log b = se^{O(\log b \sqrt{\log s})} .$$
Since $G_t$ has the desired properties, this concludes the proof of the theorem.
\end{proof}
	
A result of this type can be used to obtain an upper bound on $\tilde{r}(P_r,P_s)$.
To do so, let $G$ be an ordered graph such that any $r$-coloring of the vertices contains a monochromatic path of length $s$.
Consider an arbitrary red-blue coloring of the edges of $G$.
If $G$ does not contain an ordered red copy of $P_r$, then we may define an $r$-coloring of the vertices of $G$ by assigning to each vertex $v$ the color $i$ which is the length of the longest ordered red path ending in $v$.
By assumption, this vertex coloring of $G$ contains a monochromatic ordered copy of $P_s$.
Since all edges in this path must be blue by definition of our vertex coloring, we have found a monochromatic ordered copy of $P_s$ in our original edge-coloring of $G$, which implies $\tilde{r}(P_r,P_s)\leq |E(G)|$.

In particular,  applying this argument with the ordered  graph $G_t$ defined in the proof of Theorem~\ref{thm:vertex-coloring} yields the upper bound $\tilde{r}(P_r,P_s)\leq se^{O( \log r \sqrt{\log s})}$.
While this is weaker than the result of Theorem~\ref{thm:size-ramsey},  this approach might be helpful to improve the upper bound on $\tilde{r}(P_r,P_s)$.

\section{Proof of Theorem~\ref{thm:multicolor}}


The proof of the upper bound in Theorem~\ref{thm:size-ramsey} extends quite easily to the $q$-color case, giving an ordered graph with $O(n^{2q-1} (\log n)^3)$ edges for which any $q$-coloring of the edges contains a monochromatic $P_n$. We start by increasing the number of vertices to $N=4n^q$ and creating a sequence of ordered graphs $G_0, G_1,\ldots,G_k$ as in Lemma~\ref{lemma:random-graph} with parameters $p_i=2^{-i}$, $\ell_i=2^{i+3}t$, and $m_i=2^{i+7}n^{q-1}t$. Then, we consider an arbitrary $q$-coloring of the edges of $G=\bigcup G_i$.

For each vertex $v$, we assign a vector $(c_1(v),\ldots, c_q(v))$ where $c_i(v)$ is the length of a longest ordered path in color $i$ ending at $v$.  Since each $c_i(v)\leq n-1$, there is a set $A\subseteq V(G)$ of size at least $\frac{N}{n^{q-1}}=4n$ such that $c_1(v),\ldots, c_{q-1}(v)$ are the same for all $v\in A$. All edges within $A$ have color $q$, so it suffices to find a long path in $G[A]$. (Note that the new value of $m_i$ is chosen to reflect the increased distance between points in $A$.) Finally, the inductive proof that we can find a copy of $P_n$ in color $q$ proceeds as in the proof of the upper bound in Theorem~\ref{thm:size-ramsey}.

The proof of the lower bound in Theorem~\ref{thm:multicolor} follows the same general structure as the proof of the lower bound in Theorem~\ref{thm:size-ramsey}: we start by identifying a subset of vertices of high degree, apply an Erd\H{o}s-Szekeres coloring to the edges in this set, and use a greedy vertex-coloring to define the edge-coloring on the rest of the graph.

In particular, set $d=\binom{\frac{n}{2}+q-1}{q-1}$ and fix some ordered graph $G$ with $d(n/2)^{q}$ edges. Let  $U_0$ be the set of vertices with at least $d$ neighbors preceding them in them in the ordering of $V(G)$. Then $U_0$ contains at most $(n/2)^q$ vertices,  so we can apply an Erd\H{o}s-Szekeres $q$-coloring on $E(G[U_0])$ to obtain an edge-coloring with no monochromatic $P_{n/2}$. Next, we greedily color the vertices in $V(G)\backslash U_0$ with $d$ colors and view each of these vertex colors $c$ as a $q$-tuple $(c_1,\ldots, c_q)$ with non-negative entries which sum to $n/2$. We color the edge between two vertices of colors $c$ and $c'$ with the smallest index $i$ such that $c_i<c'_i$. Since any coordinate can take on values ranging from 0 to $n/2$, we obtain a coloring of the edges outside of $G[U_0]$ in which the longest monochromatic ordered path is $P_{n/2}$. Therefore, this strategy yields an edge-coloring of $G$ which contains no monochromatic ordered $P_n$, as desired. 


\end{document}